\newtheorem{theorem}{Theorem}
\newtheorem{corollary}{Corollary}
\newtheorem{lemma}{Lemma}
\title{An extremal property of trigonometric polynomials}
\author{ D.V. Dmitrishin and A.D. Khamitova}
\address{ Odessa National University and Odessa National Polytechnic University} 
\email{ddmitrishin@yahoo.com}  
\keywords{Trigonometric polynomials}
\subjclass{42A05, 93D09}
\begin{document}

\begin{abstract}
In this article the solution of the special problem of the conditional extremum for the conjugate trigonometric polynomials is given. 
$$\mathop{\sup }\limits_{\begin{array}{c} {(a_{1} ,\ldots,a_{n} )} \\ {\sum a_{j} =1 } \end{array}} \mathop{\min }\limits_{t} \left\{\, \sum _{j=1}^{n}a_{j} \cos jt:\sum _{j=1}^{n}a_{j} \sin jt=0  \, \right\}=-tg^{2} \frac{\pi }{2(n+1)} .$$
A possibility to apply  this result to the problems of optimal stabilization of quasidynamic chaos in discrete systems is mentioned.
\end{abstract}
\maketitle

\bigskip
As we know, the problem of optimal impact on chaotic regime is one of the most fundamental in nonlinear dynamics \cite{CY}. For the multiparameter families of discrete systems this problem can be reduced to the choice of a direction that provides maximum stability in one-parameter space.  When we change this parameter we can explore the sequence of bifurcations that leads to occurrence of a chaotic attractor.

\bigskip
First, bifurcation values of the parameter corresponds to the loss of stable equilibrium position in the system. These values are related to the area of Schur stability of family of polynomials,
\begin{equation} \label{GrindEQ__1_} 
\left\{f(\lambda )=\lambda ^{n} +k(a_{1} \lambda ^{n-1} +\ldots+a_{n} ),\qquad \sum _{j=1}^{n}a_{j} =1 \right\} 
\end{equation} 
where $k$ is a parameter. All polynomials of the family \eqref{GrindEQ__1_} are stable for $k=0$. Moreover, there exist two positive constants $k_{1} ,k_{2} $ which depend on the coefficients $a_{1} ,\ldots,a_{n} $, such that the family remains stable for $k\in (-k_{1} ,k_{2} )$ and the stability disturbs if $k=k_{2} +\varepsilon $ or $k=-k_{1} -\varepsilon $, $\forall\varepsilon>0$. We need to maximize the length of robust stability segment i.e. the function
\begin{equation} \label{GrindEQ__2_}
\Phi (a_{1} ,\ldots,a_{n} )=k_{1} (a_{1} ,\ldots,a_{n} )+k_{2} (a_{1} ,\ldots,a_{n} ).                                      
\end{equation} 

Function $\Phi (a_{1} ,\ldots,a_{n} )$ has simple geometrical meaning as
$$\frac{f(e^{it} )}{ke^{int} } = \frac{1}{k} +\sum _{j=1}^{n}a_{j} \cos jt - i\sum _{j=1}^{n}a_{j} \sin jt.$$ 
The points of intersection of curve $$\left\{ x=\sum _{j=1}^{n}a_{j} \cos jt, y=-\sum _{j=1}^{n}a_{j} \sin jt  \right\} $$ on the $OXY$ plane with $OX$ axis correspond to those values of parameter $k$, for which the polynomial of the family \eqref{GrindEQ__1_} has zeros on the unit circle. The length of the longest segment which is defined by these points of intersection is $\frac{1}{k_{1} } +\frac{1}{k_{2} } .$

\bigskip
Since $\sum _{j=1}^{n}a_{j} =1 $, $f\eqref{GrindEQ__1_}=1+k$. Hence, $\mathop{\max }\limits_{a_{1} ,\ldots,a_{n} } \left\{\, k_{1} (a_{1} ,\ldots,a_{n} )\, \right\}\le 1$. 

Let
\begin{equation}\label{aboutI}
I=\mathop{\sup }\limits_{a_{1} ,\ldots,a_{n} } \mathop{\min }\limits_{t} \left\{\, C(t):S(t)=0,\sum _{j=1}^{n}a_{j} =1\,  \right\}\end{equation}
We have that
$$\mathop{\max }\limits_{a_{1} ,\ldots,a_{n} } \Phi (a_{1} ,\ldots,a_{n} )\le 1-\frac{1}{I}. $$
Note, that the value of $I$ is negative as it follows from the Lemma 1. 

\bigskip
In order to find the value of \textit{I}, we need the results that are included in the following six lemmas.

Let
\begin{equation} \label{GrindEQ__3_} 
\rho =\mathop{\min }\limits_{t} \left\{\, C(t):S(t)=0,\qquad\sum _{j=1}^{n}a_{j} =1\,  \right\} .                                               
\end{equation} 
Since the polynomials $C(t)$ and $S(t)$ are trigonometric, it is enough to consider the minimum in \eqref{GrindEQ__3_} on the segment $ \left[ 0, \pi \right]$.

\bigskip
\begin{lemma}\label{lem1} The value of $\rho $ is negative.\end{lemma}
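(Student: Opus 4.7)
The plan is to translate the constraint $S(t)=0$ into geometry of the image of the unit circle under the polynomial $F(z):=\sum_{j=1}^n a_j z^j$. Since $F(e^{it})=C(t)+iS(t)$, the zeros of $S$ on $[0,2\pi)$ correspond to the points where the closed curve $\gamma(t):=F(e^{it})$ meets the real axis, and we must show that at least one such point has negative real part. Notice that $F$ is a non-constant polynomial with $F(0)=0$ and $F(1)=\sum a_j=1$; in particular $S\not\equiv 0$ (otherwise all $a_j=0$), so $S$ has only finitely many zeros on $[0,2\pi)$ and hence $\gamma\cap\mathbb{R}$ is a finite set.

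By the open mapping theorem applied to $F$ on the open unit disk $D$, the image $F(D)$ is open and contains $F(0)=0$; consequently $-\varepsilon\in F(D)$ for every sufficiently small $\varepsilon>0$, which is to say that $F(z)+\varepsilon$ has at least one zero inside $D$. Shrinking $\varepsilon$ further if necessary, we may arrange in addition that $-\varepsilon\notin\gamma$, using the finiteness of $\gamma\cap\mathbb{R}$. For such $\varepsilon$ the argument principle gives that the winding number of $\gamma$ about $-\varepsilon$ equals the number of zeros of $F+\varepsilon$ inside $D$, so it is at least one. Since the complement of any ray in $\mathbb{C}$ is simply connected, any closed curve with non-zero winding number about a point must intersect every ray emanating from that point; applied with the ray $(-\infty,-\varepsilon]$ from $-\varepsilon$, this yields $t^*$ with $\gamma(t^*)\in(-\infty,-\varepsilon]$, whence $S(t^*)=0$ and $C(t^*)\leq-\varepsilon<0$. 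Therefore $\rho\leq-\varepsilon<0$.

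The main obstacle is the degenerate case in which $F$ itself has a root on the unit circle, so that $\gamma$ passes through the origin. In that case the winding number of $\gamma$ about $0$ is ill-defined, and a naive perturbation of the coefficients $a_j$ need not preserve the value of $\rho$ in a controllable way. The device above of perturbing the \emph{base point} to $-\varepsilon$, rather than the data itself, avoids this issue cleanly and handles all cases uniformly.
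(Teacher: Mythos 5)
Your proof is correct and follows essentially the same route as the paper: both arguments show that the curve $F(e^{it})$, $F(z)=\sum_{j=1}^{n}a_j z^j$, has positive winding about the origin (via the zero of $F$ at $z=0$ and the argument principle) and conclude that the curve must therefore cross the negative real axis, producing $t^*$ with $S(t^*)=0$ and $C(t^*)<0$. Your perturbation of the base point to $-\varepsilon$ is in fact a genuine refinement: the paper's version tacitly assumes $F$ has no zeros on the unit circle (if it does, the winding number about $0$ is undefined, and the curve passing through the origin only gives a point with $C=S=0$, i.e.\ $\rho\le 0$), whereas your argument — using the finiteness of $\gamma\cap\mathbb{R}$ to choose $-\varepsilon\notin\gamma$, and the simple connectivity of the complement of the ray $(-\infty,-\varepsilon]$ — yields the strict bound $\rho\le-\varepsilon<0$ uniformly in all cases.
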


\begin{proof} 
Let function $F(z)=\sum _{j=1}^{n}a_{j} z^{j}$, where $z$ is a complex variable. It is clear that $z_{0} =0$ is a zero of $F(z)$.

Hence, when the point $z$ goes along the unit circle once, the argument of the function increases at least by $2\pi $. This means that the plot of $x+iy=F(e^{it} )$ on the plane $OXY$ with $t\in [\, 0,\, 2\pi \, ]$ surrounds zero. Since the function $F(e^{it} )$ is continuous, there exists a $t_{0} \in [\, 0,\, 2\pi \, ]$ such that $Re\, \left\{\, \, F(e^{it_{0} } )\, \, \right\}=C(t_{0} )<0$, $Im\, \left\{\, \, F(e^{it_{0} } )\, \, \right\}=S(t_{0} )=0$. 

This completes the proof of Lemma~\ref{lem1}.
\end{proof}

\begin{lemma}\label{lem2} Let $S(t_{0} )=0$. Then the trigonometric polynomial $S(t)$ is presented uniquely by $S(t)=(\cos t-\cos t_{0} )\sum _{j=1}^{n-1}a_{j} '\sin jt $ and $(1-\cos t_{0} )\sum _{j=1}^{n-1}a_{j} '=1+\frac{a_{1} '}{2}  $\end{lemma}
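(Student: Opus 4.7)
The plan is to divide $S(t)$ by $\sin t$, factor off the root $\cos t_0$ of the resulting polynomial in $\cos t$, and re-expand in the sine basis; the scalar identity will then follow by matching coefficients against the normalisation $\sum a_j=1$.

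First I would use $\sin jt=\sin t\cdot U_{j-1}(\cos t)$, where $U_{j-1}$ is the Chebyshev polynomial of the second kind, to write $S(t)=\sin t\cdot P(\cos t)$ with $\deg P\le n-1$. Since the interesting zeros $t_0$ lie in $(0,\pi)$ (the endpoints give $S=0$ trivially and can be excluded), we have $\sin t_0\ne 0$, so $P(\cos t_0)=0$ and the linear factor $x-\cos t_0$ divides $P(x)$. Writing $P(x)=(x-\cos t_0)Q(x)$ with $\deg Q\le n-2$, expanding $Q(\cos t)=\sum_{j=1}^{n-1}a_j'U_{j-1}(\cos t)$ in the basis $\{U_0,\ldots,U_{n-2}\}$, and multiplying through by $\sin t$ produces
$$S(t)=(\cos t-\cos t_0)\sum_{j=1}^{n-1}a_j'\sin jt.$$
Uniqueness of the coefficients $a_j'$ is immediate from the linear independence of $\sin t,\ldots,\sin(n-1)t$.

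For the second claim, I would compare coefficients. Applying the product-to-sum formula $\cos t\sin jt=\frac{1}{2}[\sin(j+1)t+\sin(j-1)t]$ to the factored right-hand side and identifying the coefficient of $\sin kt$ with $a_k$ yields the tridiagonal relations
$$a_k=\frac{1}{2}(a_{k-1}'+a_{k+1}')-(\cos t_0)\,a_k',\qquad k=1,\ldots,n,$$
with the convention $a_0'=a_n'=0$. Summing these relations over $k$, the index shifts give $\sum_{k=1}^{n}\frac{1}{2}(a_{k-1}'+a_{k+1}')=\sum_{j=1}^{n-1}a_j'-\frac{1}{2}a_1'$, because the missing left neighbour $a_0'$ halves the contribution of $a_1'$ while every other $a_j'$ is counted with full weight. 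Combining with the term $-\cos t_0\sum_{j=1}^{n-1} a_j'$ and imposing $\sum_{k=1}^n a_k=1$, this reduces to $(1-\cos t_0)\sum_{j=1}^{n-1} a_j'-\frac{1}{2}a_1'=1$, which is the stated identity.

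The main technical point is this boundary accounting in the tridiagonal sum — checking that exactly one coefficient ($a_1'$) receives a fractional weight while the others combine into the full sum $\sum a_j'$. The initial Chebyshev factorisation and the uniqueness of the representation are both routine.
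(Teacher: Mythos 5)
Your proof is correct, and for the representation part it takes a genuinely different route from the paper. The paper works entirely at the level of coefficients: equating coefficients of $\sin kt$ in $S(t)=(\cos t-\cos t_{0})\sum_{j=1}^{n-1}a_{j}'\sin jt$ yields exactly the tridiagonal relations you derive (the paper's system \eqref{GrindEQ__4_}), and existence and uniqueness of $a_{1}',\ldots,a_{n-1}'$ are obtained by noting that the subsystem formed by the equations for $a_{2},\ldots,a_{n}$ is triangular with determinant $2^{1-n}\neq 0$, so the $a_{j}'$ come out of back-substitution from $a_{2},\ldots,a_{n}$; the first equation is then a compatibility condition, which holds precisely because $S(t_{0})=0$ --- a step the paper leaves implicit. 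Your factorisation $S(t)=\sin t\cdot P(\cos t)$ with $P(x)=(x-\cos t_{0})Q(x)$ makes existence automatic and makes transparent where the hypotheses enter: you need $\sin t_{0}\neq 0$, i.e.\ $t_{0}\in(0,\pi)$, a restriction absent from the lemma's statement but tacitly assumed by the paper (all subsequent uses, Lemmas \ref{lem3}--\ref{lem6}, take $t_{0}\in(0,\pi)$); indeed for $t_{0}=\pi$ the representation can fail, e.g.\ $S(t)=\sin t$ admits no factor $1+\cos t$, so your exclusion of the endpoints is not a gap but a hypothesis you correctly identified as necessary. What each approach buys: the paper's gives the explicit recursive formulas for the $a_{j}'$ in terms of $a_{2},\ldots,a_{n}$ (used later), while yours isolates cleanly the role of $S(t_{0})=0$. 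For the scalar identity the two arguments coincide: the paper likewise sums the relations \eqref{GrindEQ__4_} against the normalisation $\sum_{j=1}^{n}a_{j}=1$, with the same boundary bookkeeping that leaves $a_{1}'$ carrying weight $\tfrac12$ (the paper's intermediate display drops a $\tfrac12 a_{1}'$ term --- a typo --- but its final identity agrees with yours).
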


\begin{proof}  Coefficients $a_{1} ,\ldots,a_{n} $ and $a_{1} ',\ldots,a'_{n-1} $ are connected by relations
\begin{equation} \label{GrindEQ__4_} 
\left\{\begin{array}{c} {a_{1} =-\cos t_{0} \cdot a_{1} '+\frac{1}{2} a_{2} ',} \\ {a_{2} =\frac{1}{2} a_{1} '-\cos t_{0} \cdot a_{2} '+\frac{1}{2} a_{3} ',} \\ {\ldots} \\ {a_{n-1} =\frac{1}{2} a_{n-2} '-\cos t_{0} \cdot a_{n-1} ',} \\ {a_{n} =\frac{1}{2} a_{n-1} '} \end{array}\right.  
\end{equation} 
The determinant of this system modulo first equation is equal to $2{}^{1-n} $. So, the coefficients $a_{1} ',\ldots,a_{n-1} '$ are uniquely presented via the coefficients $a_{2} ,\ldots,a_{n} $. Thus, from the normalization requirements we obtain:
$$1=\sum _{j=1}^{n}a_{j} =-\cos t_{0} \cdot \sum _{j=1}^{n-1}a_{j} '  +\sum _{j=2}^{n-1}a_{j} '=(1-\cos t_{0} )\cdot \sum _{j=1}^{n-1}a_{j} ' -\frac{1}{2} a_{1} ' .$$

This proves Lemma \ref{lem2}.
\end{proof}

\begin{lemma}\label{lem3}  Let $S(t_{0} )=0,\, \, t_{0} \in (\, 0,\pi \, )$. Then 
$$C(t)=-\frac{a_{1} '}{2} +(\cos t-\cos t_{0} )\sum _{j=1}^{n-1}a_{j} '\cos jt.$$
So $C(t_{0} )=-\frac{a_{1} '}{2} $, where the coefficients $a_{1} ',\, \, \ldots \, \, ,a_{n-1} '$ are determined via the coefficients $a_{2} ,\ldots,a_{n} $ in the system of equations \eqref{GrindEQ__4_}.
\end{lemma}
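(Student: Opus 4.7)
The plan is to mirror the proof of Lemma~\ref{lem2} with cosines in place of sines. I would expand the product $(\cos t - \cos t_0)\sum_{j=1}^{n-1} a_j'\cos jt$ using the product-to-sum identity $2\cos t\cos jt = \cos(j+1)t + \cos(j-1)t$, then compare harmonic-by-harmonic with $C(t)=\sum_{k=1}^n a_k\cos kt$ via the system \eqref{GrindEQ__4_} supplied by Lemma~\ref{lem2}.

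Expanding $\cos t\sum_{j=1}^{n-1} a_j'\cos jt$ produces the same tridiagonal pattern of coefficients $\tfrac{1}{2}(a_{k-1}'+a_{k+1}')$ (with the obvious endpoint conventions at $k=1$ and $k=n$) that appeared in the sine expansion in Lemma~\ref{lem2}, but with one extra contribution: the term $\cos(j-1)t$ evaluates to $1$ when $j=1$, producing an additional constant $\tfrac{a_1'}{2}$ that had no analogue on the sine side because $\sin 0=0$. After subtracting $\cos t_0\sum a_j'\cos jt$, the coefficient of $\cos kt$ for each $k\ge 1$ becomes
$$\tfrac{1}{2}(a_{k-1}'+a_{k+1}') - a_k'\cos t_0,$$
which by \eqref{GrindEQ__4_} equals $a_k$ exactly. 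Hence
$$(\cos t-\cos t_0)\sum_{j=1}^{n-1} a_j'\cos jt \;=\; \tfrac{a_1'}{2}+C(t),$$
and rearranging yields the asserted representation of $C(t)$.

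Finally, evaluating this representation at $t=t_0$ annihilates the product term and leaves $C(t_0)=-\tfrac{a_1'}{2}$. I do not anticipate any real obstacle: the argument is a cosine-side repetition of the bookkeeping already carried out in Lemma~\ref{lem2}, and the single new ingredient is the boundary constant $\tfrac{a_1'}{2}$, which is precisely what makes $C(t_0)$ nonzero (unlike $S(t_0)$) and thus gives the lemma its content.
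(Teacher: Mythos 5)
Your proof is correct and is essentially the paper's argument in reverse order: the paper substitutes the system \eqref{GrindEQ__4_} into $C_n(t)$ and telescopes downward via $C_n(t)=C_{n-1}(t)+a_{n-1}'\cos(n-1)t\,(\cos t-\cos t_{0})$, while you expand the product $(\cos t-\cos t_{0})\sum_{j=1}^{n-1}a_j'\cos jt$ and match coefficients harmonic by harmonic --- the same bookkeeping with the same identity $2\cos t\cos jt=\cos(j+1)t+\cos(j-1)t$. Your isolation of the boundary constant $\tfrac{a_1'}{2}$ from the $j=1$ term $\cos(j-1)t=1$ is precisely the term the paper extracts at its final step when reducing $C_2(t)$, so no gap remains.
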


\begin{proof} It follows from the system \eqref{GrindEQ__4_} that

$C_{n} (t)=\sum _{j=1}^{n}a_{j} \cos jt$ =
$ (-\cos t_{0}a_{1} '+\frac{1}{2} a_{2} ' )\cos t$ + 
$\ldots$+$(\frac{1}{2} a_{n-3} '-\cos t_{0}  a_{n-2} ')\cos (n-2)t $+ $(\frac{1}{2} a_{n-2} '-\cos t_{0}  a_{n-1} ') \cos (n-1)t$+$\frac{1}{2} a_{n-1} '\cos nt$ = $C_{n-1} (t)+\frac{1}{2} a_{n-1} '(\cos (n-2)t-2\cos t_{0} \cos (n-1)t$+$\cos nt$ = $C_{n-1} (t)+a_{n-1} ' \cos (n-1)t (\cos t-\cos t_{0} )$ = $C_{2} (t)+(\cos t-\cos t_{0} ) \sum _{j=2}^{n-1}a_{j} '\cos jt\,$ =$-\cos t_{0}  a_{1} ' \cos t$+$\frac{a_{1} '}{2} \cos 2t$+$(\cos t-\cos t_{0} ) \sum _{j=2}^{n-1}a_{j} '\cos jt$ = $-\frac{a_{1} '}{2} $+$(\cos t-\cos t_{0} ) \sum _{j=1}^{n-1}a_{j} '\cos jt. $

Hence $C_{n} (t_{0} )=-\frac{a_{1} '}{2} $.\end{proof}

\begin{lemma}\label{lem4} Let $S(t_{0} )=0,t_{0} \in (\, 0,\, \pi \, )$. Then
\[C(\pi )=-(1+\cos t_{0} )(-a_{1} '+a_{2} '-\ldots)-\frac{a_{1} '}{2} .\] \end{lemma}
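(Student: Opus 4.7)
The plan is almost immediate: this lemma is a direct specialization of Lemma~\ref{lem3} to the value $t = \pi$, so no new ideas are needed beyond careful substitution.

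First I would invoke Lemma~\ref{lem3}, which under the hypothesis $S(t_0)=0$ with $t_0\in(0,\pi)$ supplies the identity
\[
C(t) = -\frac{a_1'}{2} + (\cos t - \cos t_0)\sum_{j=1}^{n-1} a_j' \cos jt.
\]
Next I would substitute $t = \pi$. Since $\cos\pi = -1$, the factor $\cos t - \cos t_0$ becomes $-(1+\cos t_0)$. Since $\cos(j\pi) = (-1)^j$, the sum telescopes into the alternating expression $-a_1' + a_2' - a_3' + \cdots$. Combining these two substitutions gives exactly
\[
C(\pi) = -(1+\cos t_0)(-a_1' + a_2' - \ldots) - \frac{a_1'}{2},
\]
as claimed.

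There is essentially no obstacle: the entire content is packaged into Lemma~\ref{lem3}, and the only thing to check is the arithmetic of the signs $(-1)^j$ and of $1+\cos t_0$ at the endpoint. The condition $t_0\in(0,\pi)$ is needed only to apply Lemma~\ref{lem3}; the fact that $t = \pi$ is inside the interval $[0,\pi]$ on which the minimization takes place makes this a natural value to record for later use (presumably to obtain a sharp upper bound on $\rho$ by evaluating $C$ at the endpoint $\pi$ in subsequent lemmas).
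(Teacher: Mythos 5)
Your proposal is correct and matches the paper exactly: the paper states only that Lemma~\ref{lem4} is a consequence of Lemma~\ref{lem3}, and your substitution of $t=\pi$ (with $\cos\pi=-1$ and $\cos j\pi=(-1)^j$) is precisely the intended verification, carried out with the right signs. Nothing further is needed.
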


Lemma \ref{lem4} is a consequence of Lemma \ref{lem3}. 

\begin{lemma}\label{lem5} Let $S(t_{0} )=S(t_{1} )=0,\; t_{0} \in (\, 0,\, \pi \, ),\; t_{1} \in (\, 0,\, \pi \, ),\; t_{0} \ne t_{1} $. Then $S(t)$ is presented uniquely in the next form:
$$S(t)=(\cos t-\cos t_{0} )(\cos t-\cos t_{1} )\sum _{j=1}^{n-2}a_{j} '' \sin jt,$$ and
$$(1-\cos t_{0} )(1-\cos t_{1} )\sum _{j=1}^{n-2}a_{j} '' -(1-\cos t_{0} -\cos t_{1} )\frac{a_{1} ''}{2} -\frac{a_{2} ''}{4} =1.$$\end{lemma}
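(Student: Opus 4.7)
The plan is to apply Lemma~\ref{lem2} twice. First, since $S(t_0)=0$ and $t_0\in(0,\pi)$, Lemma~\ref{lem2} gives the unique factorization
\[S(t)=(\cos t-\cos t_0)\,\widetilde S(t),\qquad \widetilde S(t)=\sum_{j=1}^{n-1}a_j'\sin jt,\]
with the coefficients $a_1',\ldots,a_{n-1}'$ determined by the system \eqref{GrindEQ__4_}. Because $t_1\in(0,\pi)$ and $t_1\ne t_0$, the factor $\cos t_1-\cos t_0$ is nonzero, so the hypothesis $S(t_1)=0$ forces $\widetilde S(t_1)=0$. Now apply the factorization of Lemma~\ref{lem2} again to $\widetilde S$ (the argument of that lemma only used $S(t_0)=0$ and $t_0\in(0,\pi)$, not the normalization $\sum a_j=1$, so it yields the same factorization statement for any sine polynomial with an interior zero); this produces the unique representation
\[\widetilde S(t)=(\cos t-\cos t_1)\sum_{j=1}^{n-2}a_j''\sin jt,\]
and substitution gives the claimed double-factored form. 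Uniqueness of the $a_j''$ follows by combining the two uniqueness assertions.

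For the normalization identity, the step is to iterate the relation established inside the proof of Lemma~\ref{lem2}. That computation, when applied to $S(t)=(\cos t-\cos t_0)\widetilde S(t)$, did not in fact use the value $\sum a_j=1$: it only showed the linear identity
\[\sum_{j=1}^{n}a_j=(1-\cos t_0)\sum_{j=1}^{n-1}a_j'-\tfrac{1}{2}a_1'.\]
Applied in the same manner to $\widetilde S=(\cos t-\cos t_1)\sum a_j''\sin jt$, it yields
\[\sum_{j=1}^{n-1}a_j'=(1-\cos t_1)\sum_{j=1}^{n-2}a_j''-\tfrac{1}{2}a_1''.\]
Finally, the first equation of the analogue of \eqref{GrindEQ__4_} at the level $\widetilde S$ reads $a_1'=-\cos t_1\cdot a_1''+\tfrac{1}{2}a_2''$.

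Substituting these two expressions into $(1-\cos t_0)\sum a_j'-\tfrac12 a_1'=\sum a_j=1$ and collecting the $a_1''$ terms (they contribute $-\tfrac12(1-\cos t_0)a_1''+\tfrac12\cos t_1\,a_1''=-\tfrac12(1-\cos t_0-\cos t_1)a_1''$) produces exactly
\[(1-\cos t_0)(1-\cos t_1)\sum_{j=1}^{n-2}a_j''-(1-\cos t_0-\cos t_1)\tfrac{a_1''}{2}-\tfrac{a_2''}{4}=1.\]
The only real obstacle is purely bookkeeping: one must be careful that the sum-identity inside the proof of Lemma~\ref{lem2} was a pointwise linear identity (valid without the normalization hypothesis), so that it can be legitimately reused at the $\widetilde S$ stage where the coefficients $a_j'$ are not constrained to sum to $1$.
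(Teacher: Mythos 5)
Your proposal is correct and follows exactly the route the paper takes: its entire proof of Lemma~\ref{lem5} is the remark ``apply Lemma~\ref{lem2} twice,'' and your argument is precisely that double application, with the bookkeeping (the injectivity of $\cos$ on $(0,\pi)$ forcing $\widetilde S(t_1)=0$, the reuse of the sum identity without normalization, and the substitution $a_1'=-\cos t_1\cdot a_1''+\tfrac12 a_2''$) carried out correctly where the paper leaves it implicit.
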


 For the proof we need to apply the Lemma 2 twice.
 
\begin{lemma}\label{lem6} Let $S(t_{0} )=S(t_{1} )=0,\; t_{0} \in (\, 0,\, \pi \, ),\; t_{1} \in (\, 0,\, \pi \, ),\; t_{0} \ne t_{1} $. Then
$$C(t_{0} )=-\frac{a_{2} ''}{4} +\frac{a_{1} ''}{2} \cos t_{1} ,$$
$$C(t_{1} )=-\frac{a_{2} ''}{4} +\frac{a_{1} ''}{2} \cos t_{0} .$$
\end{lemma}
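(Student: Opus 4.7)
The hint attached to Lemma 5 --- apply Lemma 2 twice --- suggests that Lemma 6 should also yield to iterated factorization, and that is the plan. The idea is to express $S(t)$ as $(\cos t-\cos t_0)T(t)$ via Lemma 2, use Lemma 3 to read off $C(t_0)=-a_1'/2$, and then apply Lemma 2 a second time to $T(t)$ with root $t_1$ in order to express $a_1'$ in terms of the coefficients $a_1'',a_2''$ of the doubly factored polynomial.

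Concretely, the first application of Lemma 2 yields $S(t)=(\cos t-\cos t_0)\sum_{j=1}^{n-1}a_j'\sin jt$, and Lemma 3 gives $C(t_0)=-a_1'/2$. Since $\cos t_1\neq\cos t_0$ and $S(t_1)=0$, the second factor vanishes at $t_1$, so Lemma 2 applies again (with $t_1$ in the role of $t_0$) and produces $\sum_{j=1}^{n-1}a_j'\sin jt=(\cos t-\cos t_1)\sum_{j=1}^{n-2}a_j''\sin jt$. The first equation of the system \eqref{GrindEQ__4_}, specialized to this second factorization, reads $a_1'=-\cos t_1\cdot a_1''+\tfrac{1}{2}a_2''$. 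Substituting into $C(t_0)=-a_1'/2$ immediately gives the asserted formula for $C(t_0)$.

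For $C(t_1)$ I intend to repeat the argument with the roles of $t_0$ and $t_1$ interchanged: factor out $(\cos t-\cos t_1)$ first, then $(\cos t-\cos t_0)$. This produces a formula $C(t_1)=\tfrac{1}{2}\cos t_0\cdot \tilde a_1''-\tfrac{1}{4}\tilde a_2''$ for some coefficients $\tilde a_j''$. The crucial point is that the uniqueness assertion in Lemma 5 forces $\tilde a_j''=a_j''$, since both orderings produce a representation of $S(t)$ of the form prescribed there, and such a representation is unique.

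The only genuine subtlety is this last identification of the coefficients obtained from the two factoring orders; everything else is a direct application of Lemmas 2 and 3 and some tracking of which linear system governs which set of primed coefficients. I do not expect any real calculational obstacle --- only one line of system \eqref{GrindEQ__4_} is needed in each of the two invocations of Lemma 2.
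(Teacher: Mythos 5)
Your proof is correct and is essentially the paper's own argument: the paper likewise factors $S(t)$ by Lemma~\ref{lem2}, reads off $C(t_{0})=-\frac{a_{1}'}{2}$ from Lemma~\ref{lem3}, and obtains $a_{1}'=\frac{a_{2}''}{2}-a_{1}''\cos t_{1}$ (citing Lemma~\ref{lem5}, whose proof is exactly your second application of Lemma~\ref{lem2} together with the first line of system \eqref{GrindEQ__4_}), then disposes of $C(t_{1})$ ``in a similar way.'' Your explicit appeal to the uniqueness assertion of Lemma~\ref{lem5} to identify the coefficients arising from the two factoring orders merely makes precise the symmetry the paper leaves implicit.
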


\begin{proof} By Lemma \ref{lem2}
$$S(t)=(\cos t-\cos t_{0} )\cdot \sum _{j=1}^{n-1}a_{j} '\sin jt,$$ 
where $a_{1} '=\frac{a_{2} ''}{2} -a_{1} ''\cos t_{1} $ by Lemma \ref{lem5}. Then by Lemma \ref{lem3}
$$C(t_{0} )=-\frac{a_{1} '}{2} =-\frac{a_{2} ''}{4} +\frac{a_{1} ''}{2} \cos t_{1} .$$

We get the value of $C(t_{1} )$ in a similarly way. \end{proof}

\begin{corollary} If $S(t_{0} )=S(t_{1} )=0$, $C(t_{0} )=C(t_{1} )$, $t_{0} \ne t_{1} $, then
$$C(t_{0} )=C(t_{1} )=-\frac{a_{2} ''}{4} .$$
\end{corollary}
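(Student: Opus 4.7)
The plan is to derive the corollary directly from Lemma~\ref{lem6} by a one-line subtraction argument. From Lemma~\ref{lem6} we already have the two representations
$C(t_0) = -\tfrac{a_2''}{4} + \tfrac{a_1''}{2}\cos t_1$ and $C(t_1) = -\tfrac{a_2''}{4} + \tfrac{a_1''}{2}\cos t_0$, so the hypothesis $C(t_0)=C(t_1)$ instantly yields an equation that involves only $a_1''$ and the two cosines.

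First I would subtract the two identities of Lemma~\ref{lem6}, obtaining $\tfrac{a_1''}{2}(\cos t_1 - \cos t_0) = 0$. Next I would invoke the hypothesis that $t_0, t_1 \in (0,\pi)$ with $t_0 \ne t_1$; since the cosine function is strictly monotone on $(0,\pi)$, this forces $\cos t_0 \ne \cos t_1$, and hence $a_1'' = 0$. Finally I would substitute $a_1''=0$ back into either expression from Lemma~\ref{lem6} to conclude $C(t_0) = C(t_1) = -\tfrac{a_2''}{4}$.

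There is essentially no obstacle here: the corollary is a purely algebraic consequence of Lemma~\ref{lem6}, and the only subtlety worth flagging is the use of the open interval $(0,\pi)$ to rule out the degenerate case $\cos t_0 = \cos t_1$, which would have prevented the cancellation of $a_1''$.
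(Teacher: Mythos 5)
Your proof is correct and is exactly the argument the paper intends: the corollary is stated as an immediate consequence of Lemma~\ref{lem6}, and the paper itself uses the same deduction (subtracting the two identities to force $a_{1}''=0$, via the injectivity of $\cos$ on $(0,\pi)$ from Lemma~\ref{lem6}'s hypotheses) when it invokes the corollary in the proof of the theorem. Your explicit flagging of why $\cos t_{0}\ne\cos t_{1}$ is a welcome touch the paper leaves implicit.
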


Now we come to the main result of the paper:
\begin{theorem} 
Let $C(t)$, $S(t)$ be the pair of conjugate trigonometric polynomials defined by
$$C(t)=\sum _{j=1}^{n}a_{j} \cos jt, \ \ S(t) = \sum _{j=1}^{n}a_{j} \sin jt,$$
satisfing the normalization condition $\sum _{j=1}^{n}a_{j} =1 .$ Denote by $I$ the conditional extremum $\mathop{\sup }\limits_{a_{1},      \ldots ,a_{n} } \mathop{\min }\limits_{t} \left\{\, \, C(t):S(t)=0\, \right\}$, we have that 
\begin{equation} I=-tg^{2} \frac{\pi }{2(n+1)}.\end{equation}
\end{theorem}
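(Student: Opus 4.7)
The plan is to match an explicit construction with an LP-duality upper bound.

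\emph{Construction achieving the value.} Set $\phi = \pi/(2(n+1))$ and take
\[ a_j^* = \frac{(n+1-j)\sin\bigl((n+1-j)\pi/(n+1)\bigr)}{N},\qquad N = \sum_{k=1}^n k\sin\bigl(k\pi/(n+1)\bigr). \]
Differentiating the closed form $\sum_{k=0}^n\cos(k\theta) = \sin((2n+1)\theta/2)/(2\sin(\theta/2))+1/2$ and evaluating at $\theta = \pi/(n+1)$ gives $N = (n+1)/(2\tan\phi)$; evaluating the same derivative at $\theta = \pi+\pi/(n+1)$ gives $\sum_{k=1}^n k(-1)^k\sin(k\pi/(n+1)) = (-1)^n(n+1)\tan\phi/2$. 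The first identity gives $\sum_j a_j^* = 1$; the second, via the substitution $m=n+1-j$, gives $C^*(\pi) = \sum_j(-1)^j a_j^* = -\tan^2\phi$. Moreover, one checks directly that $P^*(x):=\sum_j a_j^* U_{j-1}(x)$ factors as $c\prod_k(x-x_k)^2$ (for $n$ odd) or $c(1+x)\prod_k(x-x_k)^2$ (for $n$ even), where $x_k=\cos((2k+1)\pi/(n+1))$, and $c>0$. Hence $P^*\ge 0$ on $[-1,1]$ and $S^*(t)=\sin t\cdot P^*(\cos t)\ge 0$ on $[0,\pi]$. A perturbation $a_j^*+\varepsilon\eta_j$ with $\sum\eta_j=0$ strictly positivises $P$ on $(-1,1)$; along this family the only zeros of $S$ in $(0,\pi]$ are at $\pi$, so $\rho=C(\pi)\to -\tan^2\phi$, giving $I\ge -\tan^2\phi$.

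\emph{Upper bound.} First reduce to the regime $S(t)\ge 0$ on $(0,\pi)$. If the sup were attained with $S$ having an interior zero at which the minimum $\rho$ is realised, Lemma~3 gives $\rho=-a_1'/2$ subject to the normalisation in Lemma~2, while two such interior zeros would (by the Corollary) force $\rho=-a_2''/4$ and $a_1''=0$. Iterating the factorisations of Lemmas~2 and 5 reduces each such configuration to a strictly lower-dimensional problem whose optimum can be bounded, by induction on $n$, strictly below $-\tan^2\phi$. Hence the sup is approached only when $S\ge 0$ on $(0,\pi)$, at which point $\rho=C(\pi)$ and the problem becomes the linear program
\[ I = \sup\Bigl\{\textstyle\sum_{j=1}^n (-1)^j a_j:\ \sum_j a_j = 1,\ P(x)\ge 0 \text{ on } [-1,1]\Bigr\}. \]
Its dual is $\min\{\lambda:\ \lambda-(-1)^j = \int_{-1}^1 U_{j-1}(x)\,d\mu(x),\ j=1,\dots,n,\ \mu\ge 0\}$. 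Taking $\mu^*$ to be the discrete measure with point masses $c_k$ at the nodes $x_k=\cos((2k+1)\pi/(n+1))$ (together with an extra mass at $x=-1$ when $n$ is even) and solving the $n$ moment equations produces $\lambda = -\tan^2\phi$; weak duality then gives $I\le -\tan^2\phi$, closing the proof.

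\emph{Main obstacle.} The technically hardest step is verifying nonnegativity of the dual masses $c_k$ at the proposed nodes $x_k$; this is the actual inequality delivering the upper bound. I expect it to follow from a Lagrange-interpolation representation exhibiting each $c_k$ as a positive ratio of sines on the Chebyshev-like node set, together with the discrete orthogonality of $U_{j-1}$ on these nodes. A secondary difficulty is the reduction step, i.e.\ uniformly in $n$ eliminating configurations with interior zeros of $S$ that realise the minimum: this requires a careful induction via the factorisation formulas of Lemmas~2--6, which in principle covers all possible patterns of binding interior zeros. Combining the two pieces yields both the upper and lower bounds and completes the theorem.
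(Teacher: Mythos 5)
Your lower bound is essentially correct and, after the identity $\sin((n+1-j)\pi/(n+1))=\sin(j\pi/(n+1))$, your $a_j^*$ coincide with the paper's optimal coefficients $a_j^{0}=2\,\mathrm{tg}\frac{\pi}{2(n+1)}(1-\frac{j}{n+1})\sin\frac{\pi j}{n+1}$; your perturbation making $P>0$ on $(-1,1)$ plays the same role as the paper's family $S^{\varepsilon}(t)=\frac{1}{1+\varepsilon}S^{0}(t)+\frac{\varepsilon}{1+\varepsilon}\sin t$. The upper bound, however, is where the theorem actually lives, and both halves of yours are gaps. In the LP-duality half you never verify that the moment system $\lambda-(-1)^{j}=\int U_{j-1}\,d\mu$, $j=1,\dots,n$, is solvable at all, let alone with nonnegative masses: it is genuinely overdetermined ($n$ equations against roughly $(n+1)/2$ unknowns, namely $\lambda$ and the masses at the double nodes of $P^*$), so its consistency is not bookkeeping --- it encodes exactly the extremality you are trying to prove, via a Gauss-type quadrature exactness statement on the nodes $\cos((2k+1)\pi/(n+1))$. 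You flag this yourself as the ``main obstacle,'' which means the inequality $I\le-\tan^{2}\frac{\pi}{2(n+1)}$ is nowhere established. The paper sidesteps all of this by invoking Fej\'er's classical theorem: once $S\ge 0$ on $[0,\pi]$, writing $S(t)=\sin t\,(\gamma_1+2\gamma_2\cos t+\cdots)$ forces $|\gamma_2|\le\cos\frac{\pi}{n+1}|\gamma_1|$, and maximizing $-\gamma_1+\gamma_2$ under $\gamma_1+\gamma_2=1$ is a two-variable problem solved by inspection. Your duality program would in effect re-prove Fej\'er's inequality; legitimate, but it is the whole content and it is not done.

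The reduction step has a more structural defect. You claim that configurations where the minimum is realized at interior sign-changing zeros of $S$ reduce ``by induction on $n$'' to lower-dimensional instances with optimum strictly below $-\tan^{2}\frac{\pi}{2(n+1)}$. But the factored problems of Lemmas 2 and 5 are \emph{not} instances of the same extremal problem in lower dimension: the normalization becomes $(1-\cos t_0)\sum_{j=1}^{n-1}a_j'-\frac{a_1'}{2}=1$, with $t_0$ a free parameter coupled to the new coefficients, rather than $\sum a_j'=1$, so the induction hypothesis simply does not apply to them, and the quantity to be bounded is a minimum over zeros of a different polynomial. The paper's actual mechanism is variational, not inductive: it first secures existence of a maximizer of the relaxed functional $\rho_1$ on the compact sets $A_R$ via upper semicontinuity (a step your sketch omits entirely, and which is needed because the supremum of $\rho$ itself is not attained), and then uses the monotonicity coming from Lemmas 3, 4 and 6 --- e.g.\ $C_{\theta}(\theta)=-\frac{a_1'/2}{(1-\cos\theta)\sigma-a_1'/2}$ is increasing in $\theta$ since $a_1'>0$, $\sigma>0$ --- to show that nudging an offending interior zero toward $\pi$ strictly improves the minimum, so any optimal pair has $T=\emptyset$. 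As it stands, your proposal establishes only $I\ge-\tan^{2}\frac{\pi}{2(n+1)}$ (modulo routine verifications); to close it you would need either to carry out the quadrature/nonnegativity computation for the dual measure and replace the faulty induction by a genuine improvement argument, or to follow the paper and combine the compactness--semicontinuity setup with the zero-shifting perturbation and Fej\'er's inequality.
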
 

\begin{proof} The function $S(t)$ vanishes at $t=\pi$ for all coefficients $a_{1} ,\ldots,a_{n} $. In the following, 
we will find the value $\sup\rho (a_{1} ,..,a_{n} )\,$ on the set 
$$A_{R} =\left\{\left(a_{1} ,\ldots,a_{n} \right) : \sum _{j=1}^{n}a_{j} =1,\sum _{j=1}^{n}\left|a_{j} \right|\le R  \right\}.$$ The function $\rho (a_{1} ,..,a_{n} )$ is continuous on the set 
$A_{R} $, except those points $\left(\, a_{1} ,\, \ldots\, ,a_{n} \right)$ at which the minimum $C(t)$ is achieved at the zero of $S(t)$, where the function $S(t)$  does not change the sign.

Note that the lower limit of the function $\rho (a_{1} ,..,a_{n} )$ at the points of discontinuity is the value of the function, this means that on the set $A_{R} $ the function $\rho (a_{1} ,..,a_{n})$ is lower semicontinuous.

Together with the function $\rho (a_{1} ,..,a_{n} )$ we consider the function
$$\rho _{1} (a_{1} ,..,a_{n} )=\mathop{\min }\limits_{t\in \left[\, 0,\, \pi \, \right]} \left\{\, \, C(t) : t=T\bigcup \{\pi\} \, \right\},$$
where \textit{T} is set of points of interval $(\, 0,\, \pi \, )$ at which the function $S(t)$ changes the sign. The set $T\bigcup \{\pi\} $ is a subset of the set of all zeros of $S(t)$. Therefore, $\overline{\rho }\le \overline{\rho _{1} }$, where $\overline{\rho }$ and $\overline{\rho _{1} }$ are given, respectively, by $\overline{\rho }\le \mathop{\sup }\limits_{(a_{1} ,\ldots,a_{n})\in A_{R} } \left\{\, \, \rho (a_{1} ,\ldots,a_{n} )\, \, \right\},$\quad $\overline{\rho _{1} }\le \mathop{\sup }\limits_{(a_{1} ,\ldots,a_{n} )\in A_{R} } \left\{\, \, \rho _{1} (a_{1} ,\ldots,a_{n} )\, \, \right\}$.

\bigskip
The function $\rho _{1} (a_{1} ,\ldots,a_{n} )$ is upper semicontinuous, so on the set $A_{R} $ it reaches its maximum value, i.e. $\overline{\rho _{1} }=\mathop{\max }\limits_{(a_{1} ,\ldots,a_{n})\in A_{R} } \left\{\, \, \rho _{1} (a_{1} ,...,a_{n} )\, \, \right\}$.

A pair of trigonometric polynomials $(C^{0} (t),S^{0} (t))$ on which the maximum is reached is called an optimal pair.

Define for the optimal polynomial $S^{0} (t)$ the set $T=\left\{\, \, t_{0} ,t_{1} ,\, ...\, ,t_{q} \, \right\}$, where $0\le q\le n-2$. Besides, let
$$\min \left\{\, \, C^{0} (t_{0} ),\ldots,C^{0} (t_{q} )\, \right\}=C^{0} (t_{0} ),$$
and $C^{0} (t_{0} )<C^{0} (t_{j} )$, $j=1,\ldots, q,$ $C^{0} (t_{0} )<C^{0} (\pi )$.

Then by Lemma \ref{lem2}
\[S^{0} (t)=(\cos t-\cos t_{0} )\sum _{j=1}^{n-1}a_{j} '\sin jt .\] 

Consider the set $T_{0} =\left\{\, \theta ,t_{1} ,...,t_{q} \, \right\}$ and corresponding pair of trigonometric polynomials $(C_{\theta } (t),S_{\theta } (t))$, where
\[S_{\theta } (t)=\frac{\cos t-\cos \theta }{(1-\cos \theta ) \sigma -\frac{a_{1} '}{2} } \sum _{j=1}^{n-1}a_{j} '\sin jt \]
with $\sigma =\sum _{j=1}^{n-1}a_{j} ' $. The normalizing factor in the polynomial $S_{\theta } (t)$ is selected from lemma~\ref{lem2}. It is clear that $S_{t_{0} } (t)\equiv S^{0} (t)$.

From Lemma \ref{lem3} it follows that
\[C_{\theta } (\theta )=-\frac{\frac{a_{1} '}{2} }{(1-\cos \theta )\cdot \sigma -\frac{a_{1} '}{2} } ,\; \theta \in \left[\, t_{0} ,\, \pi \, \right).\] 

According to Lemma \ref{lem1}, the value $C^{0} (t_{0} )=-\frac{a_{1} '}{2} $ is negative. Hence $a_{1} '>0$ and $\sigma$ =$\frac{1+\frac{a_{1} '}{2} }{1-\cos t_{0} } >0$. Under these conditions the value of $C_{\theta } (\theta )$ increases as $\theta $ increases and doesn't exceed $-\frac{a_{1} '}{4\sigma -a_{1} '} $. From the continuity of the functions $C(t), S(t)$  it follows that if $0<\theta -t_{0} <\varepsilon $, then $C_{\theta } (\theta )>C^{0} (t_{0} ),\, \, \left|C_{\theta } (t_{j} )-C^{0} (t_{j} )\right|<\delta ,\, j=1,\, \ldots \, ,\, q,\, \, $ $ and\left|C_{\theta } (\pi ),\ldots,C^{0} (\pi )\right|<\delta $ for any small $\varepsilon $. Then 
$$\min \left\{\, \, C_{\theta } (\theta ),\ldots,C_{\theta } (t_{q} ),C_{\theta } (\pi )\, \right\}>\min \left\{\, \, C^{0} (t_{0} ),...,C^{0} (t_{q} ), 
C^{0} (\pi )\, \right\}=C^{0} (t_{0} )$$
at least for sufficiently small positive value of the difference $\theta -t_{0} $. This means that the pair of polynomials $C^{0} (t),S^{0} (t)$ cannot be optimal.

Consider another case:
\[C^{0} (\pi )\le C^{0} (t_{0} )<C^{0} (t_{j} ),\; j=1,...,q.\] 

For polynomials $C_{\theta } (t),\, \, S_{\theta } (t)$ the equality below holds by Lemma \ref{lem4}:
$$C_{\theta } (\pi )=-\frac{(1+\cos \theta )\sigma '+\frac{a_{1} '}{2} }{(1-\cos \theta )\cdot \sigma -\frac{a_{1} '}{2} } ,$$ 
where $\sigma '=-a_{1} '+a_{2} '-\ldots$. If $\sigma '\le -\frac{a_{1} "\sigma }{4\sigma -a_{1} '} $, then $C_{\theta } (\pi )\ge -\frac{a_{1} '}{4\sigma -a_{1} '} >C_{\theta } (\theta ),\; \theta \in (\, t_{0} ,\, \, \pi \, )$. Hence $\sigma '>-\frac{a_{1} '\sigma }{4\sigma -a_{1} '} $. In this case the values $C_{\theta } (\pi ),C_{\theta } (\theta )$ increases as $\theta $ increases. So the pair $(C^{0} (t),S^{0} (t))$ cannot be an optimal one.

\bigskip
Let $C^{0} (t_{0} )=C^{0} (t_{1} )<C^{0} (t_{j} ),\; j=2,\ldots,q, t_{0} \ne t_{1} $. Then as a concequence of Lemma \ref{lem6}, $a_{1} ''=0$. Consider pair of polynimials $(C_{\theta _{1} ,\theta _{2} } (t),S_{\theta _{1} ,\theta _{2} } (t))$, where
$$S_{\theta _{1} ,\theta _{2} } (t)=\frac{(\cos t-\cos \theta _{1} )(\cos t-\cos \theta _{2} )}{(1-\cos \theta _{1} )(1-\cos \theta _{2} )\sigma ''-\frac{a_{2} ''}{4} }\sum _{j=2}^{n-2}a_{j} ''\sin jt .$$ 
Here $\sigma ''=\sum _{j=2}^{n-2}a_{j} '' $. It is clear that $S_{t_{0} ,t_{1} } (t)=S^{0} (t)$.

Then by lemma 5
$$C_{\theta _{1} ,\theta _{2} } (\theta _{j} )=-\frac{\frac{a_{2} ''}{4} }{(1-\cos \theta _{1} )(1-\cos \theta _{2} )\sigma ''-\frac{a_{2} ''}{4} },\qquad j=1,\, 2.$$

Because $C^{0} (t_{0} )<0$, so $a_{2} ''>0,\; \sigma ''>0$. In this case the functions $C_{\theta _{1} ,\theta _{2} } (\theta _{j} )$ increases with respect to $\theta _{1} $ and $\theta _{2} $. Again, the pair $(C^{0} (t),S^{0} (t))$ cannot be the optimal one.

\bigskip

We can make an analogical conclusion in the case when there are more than 2 minimal elements in the set $\left\{\, \, C^{0} (t_{0} ),\ldots,C^{0} (t_{q} ),C^{0} (\pi )\, \right\}$. So it is shown, that the set \textit{T} is an empty set, i.e. for the optimal pair it is necessarily satisfied $S^{0} (t)\ge 0,\; t\in \left[\, 0,\pi \, \right]$. 

\bigskip
The trigonometric polynomial $S^{0} (t)$ can be performed in the form
$$S^{0} (t)=\sin t (\gamma _{1} +2\gamma _{2} \cos t+\ldots+2\gamma _{n} \cos (n-1)t),$$
where $\gamma _{1} =a_{1} +a_{3} +\ldots,$ $\gamma _{2} =a_{2} +a_{4} +\ldots,$ $\gamma _{3} =a_{3} +a_{5} +\ldots,$ $\gamma _{4} =a_{4} +\ldots $.

\noindent There is a bijection between $a_{1} ,\ldots, a_{n} $ and $\gamma _{1} ,\, ...\, ,\, \, \gamma _{n} $.
\noindent  The normalization condition  $\sum _{j=1}^{n}a_{j} =1 $ is equivalent to the equality $\gamma _{1} +\gamma _{2} =1$.

Because $T=\emptyset$, then 
$$\overline{\rho }_{1} =\mathop{\max }\limits_{(a_{1} ,\ldots,a_{n} )\in A_{R} } \left\{\, C(\pi )\, \right\}\, \, =\mathop{\max }\limits_{(a_{1} ,\ldots,a_{n} )\in A_{R} } \left\{-a_{1} +a_{2} -a_{3} +\ldots\right\}.$$
Note that$-a_{1} +a_{2} -a_{3} +\ldots=-\gamma _{1} +\gamma _{2} $.

\noindent It follows from [2] that the polynomial $S^{0} (t)$ is nonnegative implies that $\left|\gamma _{2} \right|\le \cos \frac{\pi }{n+1} \cdot \left|\gamma _{1} \right|$. Then
$$\overline{\rho }_{1} =\mathop{\max }\limits_{\gamma _{1} ,\gamma _{2} } \left\{-\gamma +\gamma _{2} :\gamma _{1} +\gamma _{2} =1,\; \left|\gamma _{2} \right|\le \cos \frac{\pi }{n+1}\left|\gamma _{1} \right|\right\}.$$
The constrained maximum is reached when
$$\gamma _{1}^{0} =\frac{1}{1+\cos \frac{\pi }{n+1} } ,\; \, \, \gamma _{2}^{0} =\frac{\cos \frac{\pi }{n+1} }{1+\cos \frac{\pi }{n+1} } ,$$
And equals to
$$\overline{\rho }_{1} =-\frac{1-\cos \frac{\pi }{n+1} }{1+\cos \frac{\pi }{n+1} } =-tg^{2} \frac{\pi }{2(n+1)} .$$

The polynomial $\frac{S^{0} (t)}{\sin t} =\gamma _{1}^{0} +2\gamma _{2}^{0} \cos t+\ldots+2\gamma _{n}^{0} \cos (n-1)t$ is called nonnegative Feier polynomial, and its coefficients are determined uniquely. Hence coefficients $a_{1}^{0} ,\ldots,a_{n}^{0} $ are defined uniquely: $a_{1}^{0} =\gamma _{1}^{0} -\gamma _{3}^{0},$ $a_{2}^{0} =\gamma _{2}^{0} -\gamma _{4}^{0} ,$ $a_{3}^{0} =\gamma _{3}^{0} -\gamma _{5}^{0} \ldots.$ And they are positive and doesn't depend on $R$ because $\sum _{j=1}^{n}\left|a_{j} \right| =1$.

It means that for all $a_{1},\ldots,a_{n} $ with $\sum _{j=1}^{n}\left|a_{j} \right| =1$, we have the followig inequalities
$$\rho _{1} (a_{1},\ldots,a_{n} )\le \overline{\rho _{1} },\; \rho (a_{1},\ldots,a_{n} )\le \overline{\rho }\le \overline{\rho _{1} }.$$

We also need to show that for function $\rho (a_{1},\ldots,a_{n} )$ the $\sup$ is achieved and equals to $\overline{\rho _{1} }$. Let us consider one-parameter family of trigonometric polynomials
$$S^{\varepsilon } (t)=\frac{a_{1}^{0} +\varepsilon }{1+\varepsilon } \sin t+\frac{a_{2}^{0} }{1+\varepsilon } \sin 2t+\ldots+\frac{a_{n}^{0} }{1+\varepsilon } \sin nt.$$ 
It is clear that $\frac{a_{1}^{0} +\varepsilon }{1+\varepsilon } +\frac{a_{2}^{0} }{1+\varepsilon } +\ldots+\frac{a_{n}^{0} }{1+\varepsilon } =1$ and $S^{\varepsilon } (t)=\frac{1}{1+\varepsilon } S^{0} (t)+\frac{\varepsilon}{1+\varepsilon } \sin t$. For all $t\in (0,\pi )$and $\varepsilon >0$ holds inequality $S^{\varepsilon } (t)>0$. Because $C^{\varepsilon } (\pi )=\frac{1}{1+\varepsilon } \overline{\rho }_{1} -\frac{\varepsilon }{1+\varepsilon } $,  $C^{\varepsilon } (\pi )<\overline{\rho }_{1} $ and $C^{\varepsilon } (\pi )\to \overline{\rho }_{1} $ at $\varepsilon \to 0$. These conditions and the independence of the coefficients of $R$ means that
$$I=\bar{\rho }=\bar{\rho }_{1} =\mathop{\sup }\limits_{\begin{array}{c} {a_{1} ,\ldots,a_{n} } \\ {\sum a_{j} =1 } \end{array}} \left\{\, \rho (a_{1} ,\ldots,a_{n} )\, \right\}=-tg^{2} \frac{\pi }{2(n+1)} .$$ 

This completes the proof of the theorem.\end{proof}

\bigskip

It is possible to find the optimal coefficients $a_{1}^{0} ,\ldots,a_{n}^{0} $ in an explicit way. Indeed, polynomial $\frac{S^{0} (t)}{\sin t} $ is proportional to the Feier polynomial:
$$\frac{S^{0} (t)}{\sin t} =\frac{1}{1+\cos \frac{\pi }{n+1} } +\frac{2\cos \frac{\pi }{n+1} }{1+\cos \frac{\pi }{n+1} } \cos t+\ldots=$$
$$\frac{1-\cos \frac{\pi }{n+1} }{n+1} \cdot \frac{2\cos ^{2} \frac{n+1}2 t}{(\cos t-\cos \frac{\pi }{n+1} )^{2} } =\gamma _{1}^{0} +2\gamma _{2}^{0} \cos t+\ldots\, \, .$$
Hence the coefficients $\gamma _{1}^{0} ,\ldots,\gamma _{n}^{0} ,\; a_{1}^{0} ,\ldots,a_{n}^{0} $ are defined by formulas
$$\gamma _{j}^{0} =\frac{1}{2(n+1)\sin \frac{\pi }{n+1}(1+\cos \frac{\pi }{n+1} )}  \left((n-j+3)\sin \frac{\pi j}{n+1} -(n-j+1)\sin \frac{\pi (j-2)}{n+1} \right),$$
$$a_{j}^{0} =2\cdot tg\frac{\pi }{2(n+1)} \cdot (1-\frac{j}{n+1} )\cdot \sin \frac{\pi j}{n+1} ,\; j=1,...,n.$$
Returning to the problem of maximizing the function \eqref{GrindEQ__2_}, we get
$$\mathop{\max }\limits_{a_{1} ,\ldots,a_{n} } \left\{\, k_{2} (a_{1} ,\ldots,a_{n} )\, \right\}=k_{2} (a_{1}^{0} ,\ldots,a_{n}^{0} )=ctg^{2} \frac{\pi }{2(n+1)} .$$
Note that coefficients $a_{1}^{0} ,\ldots,a_{n}^{0} $ are positive, so
$$\mathop{\max }\limits_{a_{1} ,\ldots,a_{n} } \left\{\, \, k_{1} (a_{1} ,\ldots,a_{n} )\, \right\}=k_{1} (a_{1}^{0} ,\ldots,a_{n}^{0} )=1.$$ 
Finally, $$\mathop{\max }\limits_{\begin{array}{c} {(a_{1} ,\ldots,a_{n} )} \\ {\sum a_{j} =1 } \end{array}} \Phi (a_{1} ,\ldots,a_{n} )=1+ctg^{2} \frac{\pi }{2(n+1)} =\frac{1}{\sin ^{2} \frac{\pi }{2(n+1)} } .$$

\bigskip


\begin{thebibliography}{10}

\bibitem{CY} Chen G., Yu X. \textit{ Chaos control},  Lect. Notes Contr. Inf. Sci. 2003. No. 292.
\bibitem{Fe}  Fejer L. \textit{Ueber trigonometrische polynome}, J. fuer die reine und angew. Math. 1915. Bd. 146. S. 53-82. 

\end{thebibliography}
\end{document}